\pgfplotsset{every axis/.append style={
                    axis x line=middle,    
                    axis y line=middle,    
                    axis line style={<->}, 
                    xlabel={$x$},          
                    ylabel={$y$},          
                    }}
\tikzset{>=stealth}
\newtheorem{Theorem}{Theorem}[section]
\newtheorem{Proposition}[Theorem]{Proposition}
\newtheorem{Lemma}[Theorem]{Lemma}
\newtheorem{Hypothesis}[Theorem]{Hypothesis}
\theoremstyle{definition}
\newtheorem{Definition}{Definition}[section]
\theoremstyle{remark}
\numberwithin{equation}{section}
\newcommand{\R}{{\mathbb R}}
\newcommand{\C}{{\mathbb C}}
\newcommand{\B}{{\mathcal B}}
\newcommand{\T}{{\mathcal T}}
\newcommand{\M}{{\mathcal M}}
\newcommand{\HH}{{\mathcal H}}
\newcommand{\J}{{\mathcal J}}
\begin{document}
\title[Restrictions on the existence of a canonical system flow hierarchy]
{Restrictions on the existence of a canonical system flow hierarchy}

\author{Injo Hur and Darren C. Ong}

\address{Department of Mathematics Education, Chonnam National University,
77 Yongbong-ro, Buk-gu, Gwangju 61186, Republic of Korea}
\email{injohur@jnu.ac.kr}

\address{Department of Mathematics,  Xiamen University Malaysia, Jalan Sunsuria, Bandar Sunsuria, 43900 Sepang, Selangor, Malaysia}
\email{darrenong@xmu.edu.my}
\urladdr{https://www.drong.my}


\date{\today}

\thanks{2010 {\it Mathematics Subject Classification.} Primary 34L40, 37K10, 81Q10}

\keywords{canonical system,  KdV hierarchy, isospectrality}


\begin{abstract}
The KdV hierarchy is a family of evolutions on a Schr\"odinger operator that preserves its spectrum. Canonical systems are a generalization of Schr\"odinger operators, that nevertheless share many features with Schr\"odinger operators. Since this is a very natural generalization, one would expect that it would also be straightforward to build a hierarchy of isospectral evolutions on canonical systems analogous to the KdV hierarchy. Surprisingly, we show that there are many obstructions to constructing a hierarchy of flows on canonical systems that obeys the standard assumptions of the KdV hierarchy. This suggests that we need a more sophisticated approach to develop such a hierarchy, if it is indeed possible to do so.\end{abstract}
\maketitle

\section{Introduction}
The Korteweg-de Vries (or KdV) equation 
\begin{equation}\label{KdV equation}
\dfrac{\partial}{\partial  t} V( x, t)=-\frac14 \dfrac{\partial^3}{\partial x^3} V( x, t)+\frac32 V( x, t) \dfrac{\partial}{\partial x}V(x,t)
\end{equation}
is a well-known mathematical model for waves on the surface of shallow water. This KdV equation has numerous applications in mathematics and physics. For example, if $V(x,t)$ is considered as a potential function of a Schr\"odinger operator 
\begin{equation}\label{Schrodinger}
L(x,t)=-\frac{d^2}{dx^2} +V(x,t),
\end{equation}
evolving this potential via the KdV equation (thought of as an evolution in time $t$) will leave the spectrum of the Schr\"odinger operator invariant. In other words, the KdV equation is associated to an \emph{isospectral} evolution of the Schr\"odinger operator. 

This concept may extend to a whole family of smooth evolutions of $V$, known as the KdV hierarchy, all of which preserve the spectrum of the Schr\"odinger operator containing $V$ as a potential. The KdV hierarchy is of crucial importance in the inverse spectral theory of Schr\"odinger operators, and the KdV equation \eqref{KdV equation} is the simplest nontrivial member of this hierarchy. Please see \cite{Gesztesy-Holden} for a detailed explanation of the KdV hierarchy.

In this paper, we would like to see what happens if we apply ideas from the KdV hierarchy to  \emph{canonical systems}. Canonical systems can be thought of as a generalization of eigenvalue equations of Schr\"odinger operators. See \cite{RemBook} for a thorough discussion of the spectral theory of canonical systems. This generalization is best understood from the perspective of Weyl-Titchmarsh $m$-functions which are thought of as spectral data. These are Herglotz functions, that is, functions that holomorphically map the upper half plane to itself. The spectrum of a Schr\"odinger operator on the non-negative half-line $[0,\infty)$ can be derived from the limiting behavior of its corresponding $m$-function (or its corresponding pair of $m$-functions, if we consider a Schr\"odinger operator on the whole line $\mathbb R$ instead). Each Schr\"odinger operator on $[0,\infty)$ corresponds to a Herglotz function in this way, whereas a Herglotz function might not be associated with a Schr\"odinger operator. On the other hand, every Herglotz function corresponds to a canonical system on $[0,\infty)$, and it is in this sense that canonical systems are the ``highest generalization" of the Schr\"odinger spectral problem.  

 Since the generalization from Schr\"odinger operators to canonical systems is so natural in spectral theory, naively, one would expect that we can find a family of isospectral canonical system flows that behave like KdV flows. Indeed, many of the tools that we use for the spectral analysis of Schr\"odinger operators and the development of the KdV hierarchy are also present in the canonical system setting. For example, canonical systems also have a transfer matrix formalism.

Surprisingly, in our paper we show that it is very difficult to construct a family of isospectral flows on canonical systems that is similar to the KdV hierarchy. More precisely, as a consequence of the zero-curvature equation \eqref{zceqnforcs} for canonical systems we derive some conditions for the existence of a hierarchy of flows. We then demonstrate that there are difficult obstructions to satisfying these conditions in the canonical system setting, which are not present in the Schr\"odinger operator setting even though KdV flows also have to satisfy their pertinent  zero-curvature equation.

We are not asserting that non-trivial isospectral flows on canonical systems would be impossible to find. The best way to clarify our result is as follows: in a very natural way, each member of the KdV hierarchy corresponds to a Hamiltonian on a canonical system. This Hamiltonian is a $2\times 2$ matrix whose entries are polynomials with real coefficients. Our result demonstrates that for canonical systems, it is impossible to create isospectral flows corresponding to polynomials of degree two or higher, unless the coefficients of the canonical system obey a restrictive technical condition at every time-step of the evolution. This restriction is not present at all in the KdV setting. 

The main takeaway from this paper is that a direct approach to constructing an isospectral flow hierarchy for canonical systems is probably not viable. It is, however, possible that a more sophisticated approach might be fruitful, perhaps by incorporating the ``twisted shift" idea introduced in \cite{Remflow}.

This paper is organized as follows.  In Section \ref{Section:KdVreview} the KdV hierarchy is reviewed via its zero-curvature equation in order to examine the tools to achieve the KdV hierarchy. The similar tools will be employed on canonical system flows and then the related zero-curvature equation is obtained in Section \ref{Section3}. It is in the last section that we realize the condition to canonical system flows similar to isospectral KdV flows, and we then see why these isospectral canonical system flows are hard to construct.\\

\textit{Acknowledgement:} 
The authors would like to deeply thank Christian Remling for valuable discussions. The second author would also like to thank Jessica Liang Yei Shan for proofreading help. The first author  was supported by Basic Science Research Program through the National Research Foundation of Korea(NRF) funded by the Ministry of Education (NRF-2016R1D1A1B03931764 and NRF-2019R1F1A1061300), and the second author was supported by a grant from the Fundamental Research Grant Scheme from the Malaysian Ministry of Education (Grant No: \\ FRGS/1/2018/STG06/XMU/02/1) and two Xiamen University Malaysia Research Funds (Grant Nos: XMUMRF/2018-C1/IMAT/0001 and \\
XMUMRF/2020-C5/IMAT/0011).

\section{Review for the KdV hierarchy}\label{Section:KdVreview}
Let us review KdV flows and their hierarchy. There are two equivalent ways to construct the KdV hierarchy: the Lax pair formalism and the zero-curvature equation. In this section we exclusively employ the latter approach, since it will be easier to generalize to canonical systems later. For related work, see \cite{Gesztesy-Holden} for an exposition of the KdV hierarchy by means of the Lax pair formalism and \cite{Ong,D&R} for a treatment of Toda flows (a discrete analogue of KdV flows) using a zero-curvature equation.

We define Schr\"odinger operators $L$ as in \eqref{Schrodinger}, where the potentials $V$ are assumed to be  real-valued functions of  two variables $x$ and $t$ such that they are $C^{\infty}$-functions in $x$ (in $\R$) and $C^1$-functions in $t$. 
We further assume that $V(x)$ is smooth in $x$, which guarantees the smoothness of the solutions $u$ to $Lu=zu$. 
Inspired by the formulation of  \cite{Remflow} we express the $t$-dependence of the Schr\"odinger operator, in the form of a $\mathbb R$-group action which acts as a shift on $L$. That is, instead of $L(x,t)$ we write $t\cdot L(x)$, and also instead of $L(x,0)$ we write $L(x)$. The advantages of this approach are expounded in \cite{Remflow}. In brief, expressing the time evolution as a $\mathbb R$-group action clarifies the role of the transfer matrix cocycle property corresponding to the evolution in time, and opens the door for potential generalizations that involve replacing the group $\mathbb R$ with a more general group.

We introduce transfer matrices that shift the operator from $x=0$ to $x=w$:
\begin{equation}\label{tmforschrodinger}
\T_\boxplus(w,z;L):=\begin{pmatrix} u(w,z;L) & v(w,z;L) \\ u_x(w,z;L) & v_x(w,z;L) \end{pmatrix}.
\end{equation}
Here $y(x)=u(x,z;L)$ and $y(x)=v(x,z;L)$ are the solutions to the eigenvalue equation $ L(x)y(x)=zy(x)$ of \eqref{Schrodinger}.  We also impose the following initial conditions for $w=0$:
\begin{eqnarray}\label{bcat0}
\begin{pmatrix}u(0,z;L) & v(0,z;L) \\  u_x(0,z;L) & v_x(0,z;L)   \end{pmatrix}
=\begin{pmatrix} 1 &0 \\ 0 &1 \end{pmatrix}.
\end{eqnarray}
Note that, since the Wronskian of two solutions to $L(x)y(x)=zy(x)$ is constant, \eqref{bcat0} implies that $\det \T_\boxplus=1$ for all $w$. 

The reason why the $\T_\boxplus$ matrices are called transfer matrices is that for any solutions $y$ to $L(x) y(x)=zy(x)$,
\begin{equation}\label{eq.Tboxplus}
\T_\boxplus(w,z;L) \begin{pmatrix} y(0,z;L) \\ y_x(0,z;L) \end{pmatrix} 
=\begin{pmatrix} y(w,z;L) \\ y_x(w,z;L) \end{pmatrix}.
\end{equation}
In other words, $\T$'s ``transfer'' the solutions from $x=0$ to $x=w$. We will sketch a justification for \eqref{eq.Tboxplus}: consider a solution $y(x)$ with initial conditions $y(0)=I_1, y'(0)=I_2$. Assume without loss of generality that $I_1^2+I_2^2=1$. Then we may write $y(x)=I_1 u(x)+I_2v(x)$ for all $x$. In that case, we can check that \eqref{eq.Tboxplus} holds.

We can also view this transfer property as an action of the transfer matrix on the Weyl-Titchmarsh $m$-functions associated with the Schr\"odinger equations $L(x)y=zy$. Assuming that $L$ is acting on the real line, these $m$-functions are defined as
\begin{equation*}
m_\pm(z; L)=\mp\frac{\tilde y_{\pm,x}(0,z;L)}{\tilde y_\pm(0,z;L) },
\end{equation*}
where $\tilde y_\pm(x,z;L)$ are solutions of the Schr\"odinger equation that are $\ell^2$ at $\pm\infty$. Using this perspective, the  $\T_\boxplus$ map acts on these $m$-functions as linear fractional transformations. More explicitly, linear fractional transformations act on $\mathbb C\cup\{\infty\}$ as
\begin{equation}\label{e.LFT}
\begin{pmatrix}
a&b\\
c&d
\end{pmatrix}
\odot z=\frac{az+b}{cz+d},
\end{equation}
and using this action, we can verify that 
\begin{equation}\label{eq:Tboxplusshifty/y}
\mathcal T_\boxplus(w,z,L)\odot   \frac{1}{m_\pm (z,L)}=\frac{1}{\mp\frac{\tilde y_{\pm,x}(w,z; L)}{\tilde y_\pm(w,z; L)}},
\end{equation}
 which can be thought of as $m_\pm$ shifted in the $x$-direction to the right by $w$ units. Alternatively, if we write $L^{(w)}$ to represent the operator $L$ shifted to the right by $w$ units (that is, $L^{(w)}(x)=L(x+w)$) we can think of \eqref{eq:Tboxplusshifty/y} as representing a transformation that shifts the Schr\"odinger operator in the folowing way:
 \begin{equation}\label{eq:Tboxplusshifty/y}
\mathcal T_\boxplus(w,z,L)\odot   m_\pm (z,L)=\ m_\pm (z,L^{(w)})
\end{equation}
 See \cite{Hur-density,K&L,Remflow} for more details.\\

We introduce the following formulation of the KdV hierarchy. The KdV hierarchy is a family of evolutions in time $t$ that obey a 
\begin{itemize}
\item cocycle property,
\item  commutativity with the shift in $x$ and
\item  polynomial recursion formalism.
\end{itemize}
 We will clarify these three items. First, we introduce another $\mathrm{SL}(2,\mathbb C)$ transfer matrix, $\mathcal T_\circledast$, which describes the evolution of the solutions (and thus $m$-functions) through time. That is, the time evolution of the $m$-functions are described by $\mathcal T_\circledast(t,z;L)\odot \frac{1}{m_\pm(z,L)}=\frac{1}{m_\pm(z,t\cdot L)}$. So $\mathcal T_\circledast$ transfers the $m$-functions (or equivalently, the $\ell^2$ solutions of the Schr\"odinger equation) from time $0$ to time $t$. We do not write $T_\circledast$ explicitly. Instead, we will later define it as a solution of a differential equation. 

 Let us make implicit the dependence of $\mathcal T_\circledast$ on $z$, to simplify notation. In order for our time evolution to make sense, we need $\mathcal T_\circledast$ to obey the following cocycle property:  
\begin{Definition}\label{Defn:cocycle} We say that $\T_\circledast$ is a \emph{cocycle}, if it satisfies the equality 
\begin{equation}\label{cocycle}
\T_\circledast (t_1+t_2;L)=\T_\circledast(t_1; t_2\cdot L)\T_\circledast(t_2;L).
\end{equation}
\end{Definition}

This means that updating the Schr\"odinger operator from $t=0$ to $t=t_1+t_2$ should give us the same result as updating the operator first from $t=0$ to $t=t_2$, and then from $t=t_2$ to $t=t_1+t_2$. \\

\vspace{0.5cm}
\begin{tikzpicture}[scale=0.6]\label{Picofcocycle}
\draw [-] (0,0) -- (10,8);  
\path [->][red,bend right,dashed] (0,0) edge (3.75,3);
\path [->][red,bend right,dashed] (3.75,3) edge (10,8);
\path [->][blue,bend left,dashed] (0,0) edge (10,8);
\node  at (-.4,0)  {$0$};
\node  at (11,8)  {$t_1+t_2$};
\node  at (4.1,2.5)  {$t_2$};
\node  at (5.3, .9) {$\T_\circledast(t_2;L)$};
\node  at (11.3, 5) {$\T_\circledast(t_1;t_2\cdot L) $};
\node  at (1, 6) {$\T_\circledast(t_1+t_2;L)$};
\draw (4.2,-1.2) node[anchor=north] {\qquad\qquad \textbf{Figure A.} \textrm{Cocycle Property}};
\end{tikzpicture}\\

In fact, if we assume the time evolution is smooth, \eqref{cocycle} is equivalent to the property that $\T_\circledast$ satisfies the following first-order autonomous equation in the time variable $t$. 
\begin{Proposition}\label{Propf T'=BT}
For a cocycle $\T_\circledast$ we have the differential equation 
\begin{equation}\label{T'=BT}
\dfrac{d}{dt} \T_\circledast(t;L)=\B( t\cdot L) \T_\circledast(t;L),
\end{equation}
where $\B$ is a map from the set of Schr\"odinger operators \eqref{Schrodinger}  to the space of $2\times 2$ complex matrices. 

Conversely, if $\T_\circledast$ obeys the equation \eqref{T'=BT} for some trace zero matrix $\B$ dependent on $ t\cdot L$, then $\T_\circledast$ is a cocycle. 
\end{Proposition}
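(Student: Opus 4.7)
For the forward direction, I would differentiate the cocycle identity \eqref{cocycle} in $t_1$ at $t_1=0$ and take this derivative as the definition of $\B$. Setting $t_1=t_2=0$ in \eqref{cocycle} yields $\T_\circledast(0;L)=\T_\circledast(0;L)^2$; since $\T_\circledast$ is $\mathrm{SL}(2,\C)$-valued, this forces $\T_\circledast(0;L)=I$. Rewriting \eqref{cocycle} with $t_2=t$ as
\begin{equation*}
\T_\circledast(t_1+t;L)=\T_\circledast(t_1;t\cdot L)\,\T_\circledast(t;L),
\end{equation*}
I then differentiate in $t_1$ at $t_1=0$ and set
\begin{equation*}
\B(L):=\left.\frac{d}{dt_1}\T_\circledast(t_1;L)\right|_{t_1=0},
\end{equation*}
which immediately produces \eqref{T'=BT}. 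The trace-zero property of $\B$ follows for free: since $\det\T_\circledast(t;L)\equiv 1$, the Liouville/Jacobi formula $\frac{d}{dt}\det\T_\circledast=\operatorname{tr}(\B)\det\T_\circledast$ forces $\operatorname{tr}\B=0$.

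For the converse, I would invoke uniqueness of solutions to first-order linear ODEs. Under the implicit normalization $\T_\circledast(0;L)=I$, fix $t_2$ and view both sides of \eqref{cocycle} as matrix-valued functions of $t_1$ by setting
\begin{equation*}
F(t_1):=\T_\circledast(t_1+t_2;L),\qquad G(t_1):=\T_\circledast(t_1;t_2\cdot L)\,\T_\circledast(t_2;L).
\end{equation*}
A direct computation using \eqref{T'=BT}, together with the group-action identity $t_1\cdot(t_2\cdot L)=(t_1+t_2)\cdot L$, shows that $F$ and $G$ both solve the linear ODE
\begin{equation*}
\frac{d}{dt_1}Y(t_1)=\B\bigl((t_1+t_2)\cdot L\bigr)\,Y(t_1),
\end{equation*}
with common initial value $F(0)=G(0)=\T_\circledast(t_2;L)$. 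Uniqueness then gives $F\equiv G$, which is precisely \eqref{cocycle}.

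The conceptually delicate point is not any individual calculation but the fact that $\B$ depends only on the current operator $t\cdot L$ and not separately on the time $t$: this is exactly what allows the cocycle property to be repackaged as an \emph{autonomous} first-order ODE, and it is the feature that will make the analogous step go through when we pass to canonical systems in Section \ref{Section3}.
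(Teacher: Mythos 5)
Your proposal is correct and takes essentially the same route as the paper: both define $\B$ by differentiating the cocycle identity \eqref{cocycle} in $t_1$ at $t_1=0$ (so that $\B$ depends only on $t_2\cdot L$), and both prove the converse by observing that the two sides of \eqref{cocycle} solve the same linear ODE $\frac{d}{dt_1}Y=\B((t_1+t_2)\cdot L)Y$ with the same initial value at $t_1=0$, then invoking uniqueness. Your added remarks (deriving $\T_\circledast(0;L)=I$ and the trace-zero fact via the Jacobi formula, which the paper handles separately in Proposition \ref{prop.traceB}) are consistent with the paper.
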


\begin{proof}
By differentiating \eqref{cocycle} with respect to $t_1$ and treating $t_2$ as a constant, we have that 
\begin{eqnarray*}
\dfrac{d}{dt_1}\T_\circledast(t_1+t_2;L)&=&\left(\dfrac{d}{dt_1}\T_\circledast(t_1; t_2\cdot L)\right)\T_\circledast(t_2;L),\\
\end{eqnarray*}
and
\begin{eqnarray*}
&&\left(\dfrac{d}{dt_1}\T_\circledast(t_1+t_2;L)\right)\T_\circledast^{-1}(t_1+t_2;L)\\
&=&\left(\dfrac{d}{dt_1}\T_\circledast(t_1;t_2\cdot L))\right)\T^{-1}_\circledast(t_1; t_2\cdot L).\\
\end{eqnarray*}
Now setting $t_1=0$  for this last equality  enables us to put 
\begin{eqnarray*}
\B( t_2\cdot L):&=&\left(\dfrac{d}{dt_1}\T_\circledast(t_1+t_2;L)\bigg\vert_{t_1=0}\right)\T^{-1}_\circledast(t_2;L)\\
&=&\left(\dfrac{d}{dt_1}\T_\circledast(t_1; t_2\cdot L)\bigg\vert_{t_1=0}\right)\T^{-1}_\circledast(0; t_2\cdot L)\\
&=& \left(\lim_{\epsilon\to 0}\frac{\T_\circledast(\epsilon; t_2\cdot L)-\T_\circledast(0; t_2\cdot L)}{\epsilon}\right)\T^{-1}_\circledast(0; t_2\cdot L).
\end{eqnarray*}
The limit exists since we assumed that $\T_\circledast$ represents a smooth evolution in time. We rename $t_2$ as $t$ and indeed we have found a function $\B$ that depends on $t\cdot L$ and satisfies \eqref{T'=BT}  

To prove the converse, we again set $t_2$ as a constant, and notice that the left and right sides of \eqref{cocycle} both solve the same differential equation $df/dt_1=\B((t_1+t_2)\cdot L) f$, and that they share the same initial conditions at $t_1=0$. Then the uniqueness of a solution of such a differential equation implies \eqref{cocycle}. 
\end{proof}
Note that in Proposition \ref{Propf T'=BT} $\textrm{trace }\B=0$, due to the fact that $\det \T_\circledast=1$. This is an elementary result, but we write down the proof below for the reader's convenience.

\begin{Proposition}\label{prop.traceB}
For any $t$, let $\B$ and $\T(t)$ be $2\times 2$ complex matrices. Assume that $\det \T(t)=1$ for all $t$, and that for some $\hat t$ the equation $\T'(\hat t)=\B\T(\hat t)$ holds. Then $\mathrm{trace}(\B)=0$. 
\end{Proposition}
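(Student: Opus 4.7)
The plan is to exploit the identity $\det \mathcal{T}(t) = 1$ by differentiating it and then extracting a trace condition. The cleanest route is Jacobi's formula: for any smooth invertible matrix-valued function $\mathcal{T}(t)$,
\begin{equation*}
\frac{d}{dt}\det \mathcal{T}(t) = \det \mathcal{T}(t)\,\mathrm{trace}\bigl(\mathcal{T}(t)^{-1}\mathcal{T}'(t)\bigr).
\end{equation*}
Since $\det \mathcal{T}(t) \equiv 1$, the left-hand side is zero, and since $\det \mathcal{T}(t) = 1 \neq 0$ we may cancel it on the right. Thus $\mathrm{trace}(\mathcal{T}(t)^{-1}\mathcal{T}'(t)) = 0$ wherever $\mathcal{T}$ is differentiable. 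Specializing to $t = \hat t$ and substituting $\mathcal{T}'(\hat t) = \mathcal{B}\mathcal{T}(\hat t)$ yields $\mathrm{trace}(\mathcal{T}(\hat t)^{-1}\mathcal{B}\mathcal{T}(\hat t)) = 0$, and cyclic invariance of the trace gives $\mathrm{trace}(\mathcal{B}) = 0$.

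If one prefers to avoid citing Jacobi's formula (since the setting is only $2\times 2$ and the hypothesis only gives the derivative relation at a single point $\hat t$), one can proceed by direct coordinate computation. Write $\mathcal{T}(t) = \begin{pmatrix} a & b \\ c & d \end{pmatrix}$ with $ad - bc = 1$ for all $t$. Differentiating the determinant identity gives $a'd - bc' + ad' - b'c = 0$ at $\hat t$. Now use $\mathcal{T}'(\hat t) = \mathcal{B}\mathcal{T}(\hat t)$ with $\mathcal{B} = \begin{pmatrix} B_{11} & B_{12} \\ B_{21} & B_{22} \end{pmatrix}$ to express $a', b', c', d'$ as linear combinations of the entries of $\mathcal{T}(\hat t)$, substitute, and observe that all cross terms cancel, leaving $(B_{11} + B_{22})(ad - bc) = 0$. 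Since $ad - bc = 1$, this forces $B_{11} + B_{22} = \mathrm{trace}(\mathcal{B}) = 0$.

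Of these two approaches I would present the Jacobi-formula version, as it is more conceptual and generalizes immediately to arbitrary matrix size (which may be useful in the canonical system setting in Section \ref{Section3}). There is no real obstacle here; the only thing to verify carefully is that differentiability of $\mathcal{T}$ at the single point $\hat t$ suffices, which it does because the determinant-derivative calculation is entirely pointwise: at $\hat t$ we only need $\det \mathcal{T}$ constant in a neighborhood (implied by $\det \mathcal{T}(t) = 1$ for all $t$) together with the existence of $\mathcal{T}'(\hat t)$.
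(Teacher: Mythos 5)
Your proposal is correct, and your preferred route is genuinely different from the paper's. The paper proves the statement by brute-force entrywise computation: it writes out the four scalar equations of $\T'(\hat t)=\B\T(\hat t)$, combines them with the cofactor-like multipliers $T_{22},-T_{21}$ and $T_{11},-T_{12}$ to isolate $B_{11}$ and $B_{22}$, and adds to get $B_{11}+B_{22}=\frac{d}{dt}\det\T(t)=0$; in effect it reproves the $2\times 2$ case of Jacobi's formula by hand (this is essentially your second, coordinate-based fallback, up to rearrangement). Your primary argument instead cites Jacobi's formula $\frac{d}{dt}\det\T=\det\T\,\mathrm{trace}(\T^{-1}\T')$, uses $\det\T\equiv 1$ to kill the left side and to guarantee invertibility of $\T(\hat t)$, substitutes $\T'(\hat t)=\B\T(\hat t)$, and finishes by cyclicity of the trace. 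Both are valid; your Jacobi route is shorter, more conceptual, and generalizes verbatim to $n\times n$ matrices, while the paper's entrywise computation is completely self-contained (consistent with its stated aim of recording an elementary proof ``for the reader's convenience'') and avoids invoking $\T^{-1}$ or any named formula. Your remark on the pointwise issue is also handled correctly: since the determinant is a polynomial in the entries, differentiability of $\T$ at the single point $\hat t$ suffices for the determinant-derivative identity there, and constancy of $\det\T$ for all $t$ makes that derivative vanish.
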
 
\begin{proof}
We write 
\begin{equation}
\T(\hat t)=\begin{pmatrix}
T_{11} & T_{12}\\
T_{21} & T_{22}
\end{pmatrix}, B=\begin{pmatrix}
B_{11} & B_{12}\\
B_{21} & B_{22}
\end{pmatrix}
\end{equation}
 If we look at the four entries of the matrices, $\T'(\hat t)=\B\T(\hat t)$ gives the system of equations

\begin{align}
T_{11}'=&B_{11}T_{11}+B_{12}T_{21}\label{e.BTNW}\\
T_{12}'=&B_{11}T_{12}+B_{12}T_{22}\label{e.BTNE}\\
T_{21}'=&B_{21}T_{11}+B_{22}T_{21}\label{e.BTSW}\\
T_{22}'=&B_{21}T_{12}+B_{22}T_{22}\label{e.BTSE}
\end{align} 
 We note that $T_{22}\times \eqref{e.BTNW}-T_{21}\times\eqref{e.BTNE}$ implies that $T_{11}'T_{22}-T_{12}'T_{21}=B_{11}$, and
$T_{11}\times \eqref{e.BTSE}-T_{12}\times\eqref{e.BTSW}$ implies that $T_{11}T_{22}'-T_{12}T_{21}'=B_{22}$. Adding those two equations obtains 
 \begin{equation}
 B_{11}+B_{22}=(T_{11}T_{22}-T_{21}T_{12})'=\dfrac{d}{dt}\det(\T(t))=0.
 \end{equation}

\end{proof}
Where $y$ is any solution to the eigenvalue equation $L(x)y=zy$, we observe that

\begin{equation*}
\dfrac{d}{dx} \begin{pmatrix} y \\ y_x \end{pmatrix}=
\begin{pmatrix} y_x \\ y_{xx} \end{pmatrix}=
\begin{pmatrix} 
0 & 1 \\ V(x,0)-z & 0
\end{pmatrix}
  \begin{pmatrix} y \\ y_x \end{pmatrix},
\end{equation*} 
  
 Thus by differentiating $\T_\boxplus$ with respect to $x$ and using \eqref{tmforschrodinger} and the Schr\"odinger equations $Lu=zu$, $Lv=zv$, we can show that 
\begin{equation}\label{T'=MT}
\dfrac{d}{dx} \T_\boxplus(x,z;L) =
\begin{pmatrix} 
0 & 1 \\ V(x,0)-z & 0
\end{pmatrix}
 \T_\boxplus(x,z;L)  \quad (=: \M\T_\boxplus). 
\end{equation}
Let us denote the first factor matrix on the right-hand side by $\M(x,z;L)$.

We have explained the cocycle property for evolutions in time, and we also derived a cocycle property for shifts in space. In fact it is true that the cocycle property can be extended to hold for evolutions in time and shifts in space jointly. This will be explained in more detail and in a more general setting in Subsection \ref{subs:group}.\\

 To construct the zero-curvature equation, we additionally assume that our time evolution  commutes with the shift in $x$, in the sense that shifting in $x$ first and then evolving by $t$ have the same effect on the flows as the reverse order (shifting in $x$ \emph{after} evolving by $t$). 
 Here we denote the effect of shift in $x$ by the same group action. In other words, since we assume this commutativity, we may thus write  $(x,t)\cdot$ instead of $t\cdot L(x)$, so now the group action is extended to the larger group $\R \times \R$. 
 
 Again, for notational convenience let us ignore the $z$-dependence of $\T_\boxplus$ and $\T_\circledast$. See the figure below.

\vspace{0.5cm}
\begin{tikzpicture}[scale=0.7]\label{Picofcommutativity}
\draw [red,thick,->] (0,0) -- (10,0);
\draw [red,thick,->] (10,0) -- (10,6);
\draw [blue,thick,->] (0,0) -- (0,6);
\draw [blue,thick,->] (0,6) -- (10,6);
\node at (-1,0) {$(0,0)$};
\node at (11.2,0) {$(x,0)$};
\node at (-1,6) {$(0,t)$};
\node at (11.2,6) {$(x,t)$};
\node [red] at (5,-.7) {$\T_\boxplus(x;L)$};
\node [red] at (13,3) {$\T_\circledast(t;(x,0)\cdot L)$};
\node [blue] at (-2,3) {$\T_\circledast(t;L)$};
\node [blue] at (5,6.7) {$\T_\boxplus(x;(0,t) \cdot L)$};
\draw (4.2,-2) node[anchor=north] {\qquad\qquad \textbf{Figure B.} \textrm{Commutativity of the $x$-shift and $t$-shift}};
\end{tikzpicture}\\
Since the two (blue and red) paths on the figure should have the same effect, 
\begin{equation}\label{comm with x}
\T_\circledast(t;(x,0)\cdot L)\T_\boxplus(x;L)=\T_\boxplus(x;(0,t) \cdot L)\T_\circledast(t;L).
\end{equation} 
We will show that this commutativity \eqref{comm with x} implies the zero-curvature equation
\begin{align*}
&\left.\dfrac{d}{dt} \M((0,t)\cdot L)\right\vert_{t=0}-\left.\dfrac{d}{dx} \B((x,0)\cdot L)\right\vert_{x=0} \\
=& -\M(L)\B(L)   + \B(L)\M(L),
\end{align*}
or simply
\begin{equation}\label{zceqn}
\partial_t \M-\partial_x \B=-[\M, \B]=-(\M\B-\B\M).
\end{equation} 
Indeed, by differentiating \eqref{comm with x} in $x$ and $t$, we see that 
\begin{eqnarray*}
&&\partial_t\partial_x \T_\circledast (t;(x,0)\cdot L) \T_\boxplus(x;L)+ \partial_t \T_\circledast (t;(x,0)\cdot L) \partial_x \T_\boxplus (x;L)\\
&& = \partial_t \partial_x \T_\boxplus(x;(0,t) \cdot L)\T_\circledast(t;L)+ \partial_x \T_\boxplus(x;(0,t) \cdot L)\partial_t \T_\circledast(t;L).
\end{eqnarray*}

After exchanging the order of derivatives of the first term on the left-hand side (which is fine due to the smoothness condition on $V$ and therefore on solutions to the related Schr\"odinger eigenvalue equation), we obtain
\begin{eqnarray*}
&&\partial_x\partial_t \T_\circledast (t;(x,0)\cdot L) \T_\boxplus(x;L)+ \partial_t \T_\circledast (t;(x,0)\cdot L) \partial_x \T_\boxplus (x;L)\\
&& = \partial_t \partial_x \T_\boxplus(x;(0,t) \cdot L)\T_\circledast(t;L)+ \partial_x \T_\boxplus (x;(0,t) \cdot L)\partial_t \T_\circledast(t;L).
\end{eqnarray*}
We then apply \eqref{T'=BT} and \eqref{T'=MT} to get
\begin{eqnarray*}
&\partial_x \B((0,t)\cdot (x,0)\cdot L) \,\, \T_\circledast(t;(x,0)\cdot L) \,\, T_\boxplus(x;L) \\
&+\B((0,t)\cdot (x,0)\cdot L)  \,\,  \partial_x\T_\circledast(t;(x,0)\cdot L)  \,\,  T_\boxplus(x;L)\\
&+\B((0,t)\cdot(x,0)\cdot L) \,\,  \T_\circledast(t;(x,0)\cdot L)  \,\,  \M(x;L)  \,\,  \T_\boxplus(x;L)\\ 
=& \partial_t\M(x;(0,t)\cdot L) \,\, \T_\boxplus(x;(0,t)\cdot L)  \,\,  \T_\circledast (t;L)\\
&+\M(x;t\cdot L) \,\, \partial_t\T_\boxplus(x;(0,t)\cdot L) \,\, \T_\circledast(t;L)\\
&+\M(x;(0,t)\cdot L) \,\, \T_\boxplus(x;(0,t)\cdot L) \,\, \B((0,t)\cdot L) \,\, \T_\circledast(t;L).
\end{eqnarray*}
 Set $x=t=0$, and note that $\T_\circledast (0,z;L)=\T_\boxplus (0,z;L)=\mathcal I_{2\times 2}$ (the identity matrix) for any $z$ and $L$ and 

$$
\partial_x\T_\circledast (t;(x,0)\cdot L)\Bigr\rvert_{x=t=0}=\partial_t\T_\boxplus(x,(0,t)\cdot L)\Bigr\rvert_{x=t=0}=
\left(\begin{smallmatrix} 0&0\\0&0 \end{smallmatrix}\right).
$$

We then obtain \eqref{zceqn}, as desired. \\
Lastly, we describe the polynomial recursion formalism on $\B(L)$ (to simplify notation, we will just write $\B$). For this, put 
$\B=\left( \begin{smallmatrix} A& C \\ -D & -A \end{smallmatrix} \right)$
 (due to $\textrm{trace }\B=0$)
and then insert this and \eqref{T'=MT} into the zero-curvature equation \eqref{zceqn}. 
By comparing entries, the following three equations should be satisfied: 
\begin{eqnarray}\label{threeeqns}
2A+C_x &=& 0  \nonumber  \\
(V-z)C+D+A_x &=& 0 \\
2(V-z)A+D_x+V_t &=& 0. \nonumber
\end{eqnarray}
In order to see the KdV hierarchy, assume that the entries $A$, $C$ and $D$ are polynomials in $z$. Due to the three equations \eqref{threeeqns} above, we can recursively construct the polynomial entries up to integral constants. 

More precisely, plugging the first two equations into the third and expressing it in terms of $C$ show that 
\begin{equation}\label{KdV hierarchy}
V_t=-\frac{1}{2} C_{xxx}+2(V-z)C_x+V_xC. 
\end{equation}
 We assume that the polynomials $C$ in $z$ (therefore, $A$ and $D$) are differential polynomials in $V$, i.e., these are polynomials of $V$ and their derivatives in $x$ ($V_x$, $V_{xx}$ and so on). Then we can derive the KdV hierarchy. We will demonstrate two easy examples.

If we assume $A,C,D$ are constants in $z$ (that is, degree zero polynomials in $z$), plugging that into \eqref{KdV hierarchy} gets us $V_t= C V_x$. In other words, evolution in time is equivalent to shifts in $x$. 

Let us consider instead the case where $A,C,D$ are polynomials in $z$ of degree at most $1$. We then consider \eqref{KdV hierarchy} with $C:=C_1z+C_0$, where $C_1$ and $C_0$ are constants in $z$ (but not necessarily in $x$). Plugging this in and then considering the $z^2$ term of \eqref{KdV hierarchy} we get that $0=-C_{1,x}$. This means that $C_1$ is a constant in $x$ as well as $z$. Now we consider the $z$-coefficients of \eqref{KdV hierarchy}. We obtain $0=-2C_{0,x}+V_xC_1$, which implies that $C_0=\frac{1}{2}VC_1+C_\star$, for some $C_\star$ constant in $x$ and $z$. Now when we consider the constant coefficients of \eqref{KdV hierarchy}, we obtain a time-evolution in $V$ that is a linear combination of the classical KdV equation \eqref{KdV equation} and the shift $V_t=V_x$. In fact, if we set $C_\star=0$ and $C_1=1$ we get \eqref{KdV equation} exactly.
 
 We can similarly obtain the higher-order members of the KdV hierarchy. We start by assuming that $C$ is a $n$-degree polynomial in $z$ for some $n\geq 2$, and then recursively apply
\begin{equation}\label{recursion}
C_{xxx}-4(V-z)C_x-2V_xC=0.
\end{equation}
We clarify what we mean with ``recursively":  we construct any (homogeneous) polynomials of degree at most $n-1$ satisfying \eqref{recursion} and some polynomial of the largest degree (which is $n$) should satisfy \eqref{KdV hierarchy}. Any linear combinations of these two polynomials then become those of the (infinite) family of KdV hierarchy. See Section 1 of \cite{Gesztesy-Holden} for more details. Thus \eqref{KdV hierarchy} gives the whole family of the KdV hierarchy when we vary our choice of the polynomial $C$.

\section{Canonical system flows}\label{Section3}
We now would like to apply a similar construction in Section \ref{Section:KdVreview} to canonical system flows. 
\subsection{Canonical systems}
Canonical systems are the equations  
\begin{equation}\label{cs systems}
\J\vec{u}_x(x,z;\HH)=z\HH (x)\vec{u}(x,z;\HH), \quad x\in\R,
\end{equation} 
where $\J=\big( \begin{smallmatrix}  0 & -1 \\ 1 &0 \end{smallmatrix}\big)$ and $\HH$ (known as a Hamiltonian) is a symmetric positive semi-definite $2\times 2$ matrix whose entries are real-valued and locally integrable functions. Here $z$ is a spectral parameter in $\C$, as in the Schr\"odinger eigenvalue equation. In particular, canonical systems are called trace-normed if $\textrm{Tr } H(x)=1$ for almost all $x$ in $\R$. 

These canonical systems are in some sense the highest generalization of many self-adjoint spectral problems. For instance, eigenvalue equations involving Jacobi matrices, Schr\"odinger operators (more generally, Sturm-Liouville operators) and Dirac operators can be written as canonical systems. See Proposition 8 in \cite{RemdeB} or Proposition 4.1 in \cite{Hur-density} to convert Schr\"odinger eigenvalue equations to unique trace-normed canonical systems and vice versa. For discrete cases such as Jacobi operators, see \cite{Hur-nondensity}.

In the viewpoint of inverse spectral theory, this generalization is important for three reasons. First, rewriting a Jacobi, Schr\"odinger, Dirac, etc. equation as a canonical system does not change its spectrum. Second,  there is one-to-one correspondence between Herglotz functions and trace-normed half-line canonical systems \cite{deB,RemdeB,Win}. Lastly, the set of all canonical systems becomes a compact topological space with the local uniform convergence of their related Herglotz functions (or Hamiltonians $\HH$ in \eqref{cs systems}) \cite{deB,Hur-density,K&L,RemdeB}. 

In the next subsection  we will try to find isospectral canonical system flows which look like KdV flows by the zero-curvature equation for canonical system flows (which is \eqref{zceqnforcs}). We will encounter obstructions to constructing these flows that are not present for KdV flows.

Note that unlike the Schr\"odinger equation, there is in general no operator associated with a canonical system. Instead of an operator we have a self-adjoint relation \cite {HSW}. This partly explains why we favor the zero-curvature equation approach rather than attempting to apply a Lax pair formalism via operators on canonical system flows. 

\subsection{Canonical system flows and their zero-curvature equation}
To consider a smooth  evolution of canonical systems \eqref{cs systems}, let us impose a $t$-dependence on $\HH$ in \eqref{cs systems}. We rewrite $\HH(x,t)$ as $(x, t)\cdot \HH$, where we interpret $x,t$ as shifts in space and time respectively.  Then the $t$-evolution on $\HH$ is what we call a canonical system flow. Denote $\HH$ and its determinant respectively by 
\begin{equation}\label{eq:HHdefn}
(x,t)\cdot \HH=\begin{pmatrix} f(x,t) & g(x,t) \\ g(x,t) & h(x,t) \end{pmatrix}
\end{equation}
and
\begin{equation}\label{Delta}
\Delta(x,t) :=\det ((x,t)\cdot \HH)=f(x,t)h(x,t)-(g(x,t))^2.
\end{equation}

Due to the positive semi-definiteness condition on $\mathcal{H}$, the functions $f$, $h$ and $\Delta$ are non-negative for all $t$ and $x$. 

Similar to the smooth condition on the potentials $V$ for KdV flows, assume the following: 
\begin{Hypothesis}\label{Hypo}
The entries $f$, $g$ and $h$ are smooth functions (jointly) on both $x$ and $t$. 
\end{Hypothesis}
As in the construction of the KdV hierarchy, let us impose  three conditions on evolution in $t$: a cocycle property on transfer matrices, commutativity with the shift in $x$ and a polynomial recursion formalism. We will use $\textrm{ }\widetilde{}\textrm{ }$  to denote objects that are associated to canonical system flows rather than KdV flows. 
For convenience we may suppress the $x$-, $t$-, $z$- or $\HH$-dependence when it is clear.

Similar to \eqref{tmforschrodinger}, let us first introduce transfer matrices $\widetilde{\T_\boxplus}$ (equivalent to $\rho$-matrix functions in \cite{K&L}) by 
\begin{equation*}
\widetilde{\T}_\boxplus(x,z; \HH)=\begin{pmatrix} u_1(x,z; \HH) & v_1(x,z; \HH) \\ u_2(x,z; \HH) & v_2(x,z; \HH) \end{pmatrix},
\end{equation*}
where two columns $\vec{u}=\left( \begin{smallmatrix} u_1 \\ u_2 \end{smallmatrix} \right)$ and $\vec{v}=\left( \begin{smallmatrix} v_1 \\ v_2 \end{smallmatrix} \right)$ are the solutions to \eqref{cs systems}. Note that we impose the following boundary conditions:  
\begin{equation}\label{bcat0forcs}
\begin{pmatrix}
u_1(0,z; \HH) & u_2(0,z; \HH) \\ v_1(0,z; \HH) & v_2(0,z; \HH)
\end{pmatrix}
=\begin{pmatrix} 1 & 0 \\ 0&1 \end{pmatrix}. 
\end{equation}
The matrix $\widetilde{\T}_\boxplus(x,z,\HH)$ transfers the solutions from $0$ to $x$ as well. That is, for any solution $u$ of \eqref{cs systems},

\begin{equation}\label{cs.xevolution}
\widetilde{\T}_\boxplus(x,z; \HH)
\begin{pmatrix}
u_1(0,z; \HH)\\
u_2(0,z; \HH)
\end{pmatrix}
=\begin{pmatrix}
u_1(x,z; \HH)\\
u_2(x,z; \HH)
\end{pmatrix}
=\begin{pmatrix}
u_1(0,z; (x,0)\cdot\HH)\\
u_2(0,z; (x,0)\cdot \HH)
\end{pmatrix}.
\end{equation}
Particularly, this applies when the $u$ represents the $\ell^2$ solutions of the canonical system.
 Since the Wronskian of two solution vectors to \eqref{cs systems} is constant, \eqref{bcat0forcs} implies that $\det \widetilde{\T}_\boxplus=1$ for all $x$ and $\HH$. 

We can now focus on the time evolution. Let us consider the $\mathrm{SL}_2(\mathbb C)$ transfer matrices $\widetilde{\T}_\circledast(t,\HH)$. Define $ u_{1,\pm}, u_{2,\pm}$ as the solution to \eqref{cs systems} that is $\ell^2$ at $\pm\infty$.  Then we want $\widetilde{\T}_\circledast(t,\HH)$ to satisfy

\begin{equation}\label{cs.tevolution}
\widetilde{\T}_\circledast(t,\HH)
\begin{pmatrix}
u_{1,+}(0,z; \HH)& u_{1,-}(0,z; \HH) \\ u_{2,+}(0,z; \HH) & u_{2,-}(0,z; \HH)
\end{pmatrix}
=
\begin{pmatrix}
u_{1,+}(t,z; \HH)& u_{1,-}(t,z; \HH) \\ u_{2,+}(t,z; \HH) & u_{2,-}(t,z; \HH)
\end{pmatrix}
\end{equation}
Again, in order for this time-evolution to make sense the $\widetilde{\T}_\circledast$ must obey the same cocycle property given in Definition \ref{Defn:cocycle}. We then observe that the proof of Proposition \ref{Propf T'=BT} may be copied verbatim to the canonical systems setting. Thus where $\B$ is a map from the set of canonical systems to the space of $2\times 2$ complex matrices, $T_\circledast$ is the unique solution of a differential equation of the form 
\begin{equation}\label{T'=BT for cs}
\dfrac{d}{dt} \widetilde{\T}_\circledast(t;\HH)=\widetilde{\B}((0,t)\cdot\HH) \widetilde{\T}_\circledast(t;\HH),
\end{equation}
with initial conditions
\begin{equation}
\widetilde{\T}_\circledast(0;\HH)=\begin{pmatrix}1&0\\ 0&1 \end{pmatrix}.
\end{equation}
Since $\widetilde{\T}_\circledast$ has determinant $1$, $\widetilde{\B}$ has trace zero by Proposition \ref{prop.traceB}.

Again, we may consider the action of both $\widetilde{\T}_\circledast$ and $\widetilde{\T}_\boxplus$ as a linear fractional transformation \eqref{e.LFT} on $\mathbb C\cup \{\infty\}$, and we define the $m$-functions 
\begin{equation}
m_{\pm}(z,\HH):=\pm \frac{u_{1,\pm}(0,z,\HH)}{u_{2,\pm}(0,z,\HH)}.
\end{equation}
For a more extensive discussion of $m$-functions corresponding to canonical systems, please consult Section 3.4 of \cite{RemBook}.

Given this perspective, we then observe that

\begin{align}
\widetilde{\T}_\boxplus (x;\HH)\odot m_{\pm}(z,\HH)=m_{\pm}(z,(x,0)\cdot \HH),\\
\widetilde{\T}_\circledast (t;\HH)\odot m_{\pm}(z,\HH)=m_{\pm}(z,(0,t)\cdot \HH).
\end{align}
Thus we may view $\widetilde{\T}_\boxplus$ and $\widetilde{\T}_\circledast$ as capturing evolutions of $m$-functions. 
We should clarify some potential confusion by summarizing the reasoning process. We first reason that any matrix $\widetilde{\T}_\circledast$ that captures a time-evolution of the form \eqref{cs.tevolution} must obey the cocycle property in Definition \ref{Defn:cocycle}. This implies that $\widetilde{\T}_\circledast$ must obey \eqref{T'=BT for cs} for some choice of $\B$. Then, given a $\B$, we define $\widetilde{\T}_\circledast$ as the unique solution of \eqref{T'=BT for cs}, and thus given that choice of $\B$ we can define the time-evolution of $u_{1,\pm}, u_{2,\pm}$ from \eqref{cs.tevolution}.

Let us now attempt to relate $\widetilde{\T}_\circledast$ with $\widetilde{\T}_\boxplus$.  With respect to $\widetilde{\T}_\boxplus$, \eqref{cs systems} and the fact that $\J^{-1}=-\J$ lead to the differential equations for $\widetilde{\T}_\boxplus$ in $x$
\begin{equation}\label{T'=MT for cs}
\dfrac{d}{dx} \widetilde{\T}_\boxplus(x,z)=-z\J ((x,0)\cdot\HH) \widetilde{\T}_\boxplus(x,z)\quad (=: \widetilde{M}\widetilde{\T}_\boxplus).
\end{equation}
Similar to \eqref{T'=MT} for KdV flows, define 
\begin{equation*}
\widetilde{M}:=-z\J \HH=z \begin{pmatrix} g & h \\ -f & -g \end{pmatrix}. 
\end{equation*}
By assuming that our evolution in $t$ commutes with the shift in $x$ and using \eqref{T'=BT for cs} and \eqref{T'=MT for cs}, we can construct the zero-curvature equation for \eqref{cs systems}. Both the zero-curvature equation and its proof are identical to that for \eqref{zceqn} except for the tildes. We write
\begin{equation}\label{zceqnforcs}
\dfrac{d}{dt} \widetilde{\M}-\dfrac{d}{dx} \widetilde{\B}=\widetilde{\B}\widetilde{\M}-\widetilde{\M}\widetilde{\B}.
\end{equation}

As the last condition, assume that $\widetilde{\B}$ have polynomial entries (in $z$), and denote them by
\begin{equation*}
\widetilde{\B}=\begin{pmatrix} \widetilde{A} & \widetilde{C} \\ -\widetilde{D} & -\widetilde{A} \end{pmatrix}.
\end{equation*}
Plugging the matrix above and $\widetilde{\M}$ into the zero-curvature equation \eqref{zceqnforcs} then leads to three conditions for entries of $\widetilde{\B}$ and $\HH$:
\begin{eqnarray}\label{threeeqnsforcs}
  zh_t-\widetilde{C}_x &=&  2zh \widetilde{A}-2zg \widetilde{C}  \nonumber   \\ 
-zf_t +\widetilde{D}_x &=& 2zf\widetilde{A}-2zg \widetilde{D}  \\
zg_t-\widetilde{A}_x &=& -zf \widetilde{C}+zh \widetilde{D}. \nonumber
\end{eqnarray}
In summary, these conditions \eqref{threeeqnsforcs} are equivalent to the zero-curvature equation \eqref{zceqnforcs} for the canonical system flows \eqref{cs systems}.

\subsection{Canonical system flows as a $\mathbb R\times\mathbb R$ group action}\label{subs:group}
We really think of our integrable flow as a group action on $\mathbb R\times\mathbb R$, where the first component represents the shift on the $x$-axis, and the second component represents the shift on time $t$.

It is clear from \eqref{cs.tevolution} that $\widetilde \T_\circledast$ obeys the $t$-cocycle property expressed in Definition \ref{Defn:cocycle}  . We can in a similar way deduce that by \eqref{cs.xevolution} $\widetilde \T_\boxplus$ also obeys the cocycle property in terms of shifts on $x$. In other words, for any $x,y\in\mathbb R$
\begin{equation}
\widetilde \T_\boxplus(x+y,\HH)=\widetilde \T_\boxplus(x,(y,0)\cdot\HH)\T_\boxplus(y,\HH).
\end{equation} 

Let us combine the maps $\widetilde\T_\circledast, \widetilde\T_\boxplus$ into a cocycle for a $\mathbb R\times\mathbb R$-group action on $\HH$.  We define
\begin{equation}\label{defn:jointgroup}
\widetilde \T((x,t);\HH):= \widetilde \T_\circledast(t;(x,0)\cdot \HH)\widetilde \T_\boxplus(x; \HH).
\end{equation}

We now have to prove that $\widetilde T$ obeys the joint cocycle property,
\begin{equation}\label{eq:jointcocycle}
\widetilde \T(g+h;\HH)= \widetilde \T(g;h\cdot \HH)\widetilde \T(h; \HH), \text{ for }g,h\in\mathbb R\times \mathbb R.
\end{equation}

\begin{Proposition}
The zero-curvature equation \eqref{zceqnforcs} and the joint cocycle condition \eqref{eq:jointcocycle} are equivalent.
\end{Proposition}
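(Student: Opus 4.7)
The plan is to establish the equivalence by routing both directions through the commutativity identity \eqref{comm with x}, which in the canonical-system setting reads
\[
\widetilde{\T}_\circledast(t;(x,0)\cdot\HH)\widetilde{\T}_\boxplus(x;\HH)=\widetilde{\T}_\boxplus(x;(0,t)\cdot\HH)\widetilde{\T}_\circledast(t;\HH).
\]
Both implications will pass through this identity.

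For \eqref{eq:jointcocycle}$\Rightarrow$\eqref{zceqnforcs}, I apply the joint cocycle to the two decompositions $(x,t)=(x,0)+(0,t)=(0,t)+(x,0)$, expand each via \eqref{defn:jointgroup}, and simplify using $\widetilde{\T}_\circledast(0;\cdot)=\widetilde{\T}_\boxplus(0;\cdot)=I$. Both expansions represent $\widetilde{\T}((x,t);\HH)$, so equating them yields the displayed identity. The passage from that identity to the zero-curvature equation \eqref{zceqnforcs} is identical (up to tildes) to the Schr\"odinger calculation carried out in Section \ref{Section:KdVreview}, and the paper has already noted that this argument transfers verbatim.

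For the converse \eqref{zceqnforcs}$\Rightarrow$\eqref{eq:jointcocycle}, the core step and main obstacle is recovering the commutativity identity from the zero-curvature equation. Define
\[
\Phi(x,t):=\widetilde{\T}_\circledast(t;(x,0)\cdot\HH)\widetilde{\T}_\boxplus(x;\HH),\qquad \Psi(x,t):=\partial_x\Phi(x,t)-\widetilde{\M}((x,t)\cdot\HH)\,\Phi(x,t).
\]
By \eqref{T'=BT for cs} applied with $\HH$ replaced by $(x,0)\cdot\HH$, together with the fact that the $\R\times\R$-action is abelian so $(0,t)\cdot(x,0)=(x,t)$, one gets $\partial_t\Phi=\widetilde{\B}((x,t)\cdot\HH)\Phi$; and \eqref{T'=MT for cs} at $t=0$ gives $\Psi(x,0)=0$. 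Interchanging $\partial_t\partial_x$ with $\partial_x\partial_t$ (legitimate by Hypothesis \ref{Hypo}), substituting $\partial_t\Phi=\widetilde{\B}\Phi$, and plugging in the zero-curvature equation \eqref{zceqnforcs}, a short computation produces $\partial_t\Psi=\widetilde{\B}((x,t)\cdot\HH)\Psi$. Uniqueness for this linear ODE in $t$ forces $\Psi\equiv 0$, so $\Phi$ satisfies the same $x$-ODE \eqref{T'=MT for cs} (with Hamiltonian $(0,t)\cdot\HH$) and the same initial condition $\Phi(0,t)=\widetilde{\T}_\circledast(t;\HH)$ as $G(x,t):=\widetilde{\T}_\boxplus(x;(0,t)\cdot\HH)\widetilde{\T}_\circledast(t;\HH)$, so $\Phi=G$, which is the commutativity identity.

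Once commutativity is in hand, \eqref{eq:jointcocycle} for $g=(a,b)$ and $h=(c,d)$ reduces to bookkeeping: expand both sides via \eqref{defn:jointgroup}, invoke the individual $x$-cocycle of $\widetilde{\T}_\boxplus$ and $t$-cocycle of $\widetilde{\T}_\circledast$ (each of which is immediate from \eqref{cs.xevolution} and \eqref{cs.tevolution}), and apply the commutativity identity once, with the Hamiltonian $(c,0)\cdot\HH$, to interchange a middle $\widetilde{\T}_\boxplus$ factor with a middle $\widetilde{\T}_\circledast$ factor; then combining like factors on each side completes the proof. All of this final step is routine, so essentially all of the nontrivial work sits in the ODE-uniqueness argument described above.
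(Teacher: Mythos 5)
Your proof is correct, and its overall architecture matches the paper's: the forward direction is the tilde-version of the Schr\"odinger computation, and for the converse you reduce the joint cocycle \eqref{eq:jointcocycle} to the commutativity identity (the paper's \eqref{eq:suffices}) by exactly the same bookkeeping with the individual cocycles and one application of commutativity at the Hamiltonian shifted by the second $x$-increment. Where you genuinely diverge is in how the commutativity identity is extracted from the zero-curvature equation \eqref{zceqnforcs}. The paper compares the two sides of \eqref{eq:suffices} directly as solutions of the same linear $t$-ODE, which forces it to isolate the auxiliary derivative identity \eqref{eq:SecondStage} (a commutator-type formula for $\widetilde\B$ along the $x$-shift) and prove that by a second uniqueness argument in $x$ using \eqref{T'=MT for cs} and \eqref{zceqnforcs}. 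You instead form the defect $\Psi=\partial_x\Phi-\widetilde\M((x,t)\cdot\HH)\Phi$ of the left-hand side $\Phi$ with respect to the $x$-equation, show via \eqref{T'=BT for cs}, the interchange of mixed partials, and \eqref{zceqnforcs} (applied at $(x,t)\cdot\HH$, which is legitimate since it holds for every Hamiltonian) that $\Psi$ satisfies the homogeneous linear $t$-ODE $\partial_t\Psi=\widetilde\B\Psi$ with $\Psi(x,0)=0$, hence vanishes, and then a single $x$-ODE uniqueness step identifies $\Phi$ with $\widetilde\T_\boxplus(x;(0,t)\cdot\HH)\widetilde\T_\circledast(t;\HH)$. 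Both are two-stage ODE-uniqueness arguments driven by zero curvature, but your ``defect propagation'' version bypasses \eqref{eq:SecondStage} entirely and is arguably cleaner, while the paper's route has the mild advantage of exhibiting that commutator identity for $\widetilde\B$ explicitly, which is of some independent interest. I see no gap in your argument.
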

\begin{proof}
Showing that the joint cocycle condition implies the zero-curvature equation is straightforward. Obviously the existence of the joint cocycle on $x$ and $t$ implies the cocycle property for $x$ and $t$ individually; and this in turn implies \eqref{T'=BT for cs} and \eqref{T'=MT for cs} apply for $\widetilde\T$ (to clarify, we don't need the fact that $\M=-z\J(x,0)\cdot\HH$ from  \eqref{T'=MT for cs}, we just need that $\M$ is some $x$-dependent $2 \times 2$ complex matrix).  We can demonstrate this implication in the same way as the proof of Proposition \ref{Propf T'=BT}. Then \eqref{zceqnforcs} follows immediately.

The other direction is harder. First, we will demonstrate that it suffices to prove
\begin{equation}\label{eq:suffices}
\widetilde \T((0,t);(x,0)\cdot \HH)\widetilde \T((x,0);\HH) = \widetilde \T((x,0);(0,t)\cdot \HH)\widetilde\T((0,t);\HH).
\end{equation}
This is due to the commutativity of the group $\mathbb R\times \mathbb R$. Specifically, by replacing $x$ with $x_1$ and $\HH$ with $x_2\cdot\HH$ we have that 
\begin{align}\nonumber
&\widetilde \T((0,t);(x_1+x_2,0)\cdot \HH)\widetilde \T((x_1,0);(x_2,0)\cdot \HH) \\
&= \widetilde \T((x_1,0);(x_2,t)\cdot \HH)\widetilde\T((0,t);(x_2,0)\cdot \HH) \label{midstep}.
\end{align}
Note that \eqref{defn:jointgroup}) implies that the cocycle property applies for $x$ and $t$ individually (although we don't know this yet for $x$ and $t$ jointly). In other words for any $x$,$t$ and $\HH$,

\begin{align}
\widetilde \T((x,t);\HH)=& \widetilde \T((0,t);(x,0)\cdot \HH)\widetilde \T((x,0); \HH), \label{e.cocycletx}\\
\widetilde \T((x,t);\HH)=& \widetilde \T((x,0);(0,t)\cdot \HH)\widetilde \T((0,t); \HH). \label{e.cocyclext}
\end{align}

Thus for arbitrary $(x_1,t_1)$ and $(x_2,t_2)$ in $\mathbb R\times\mathbb R$,
\begin{align} 
&\widetilde \T((x_1+x_2,t_2+t_1);\HH)\nonumber\\
&=\widetilde \T((0,t_2+t_1);(x_1+x_2,0)\cdot\HH) \widetilde \T((x_1+x_2,0);\HH)\nonumber\\
&=\widetilde \T((0,t_1);(x_1+x_2,t_2)\cdot \HH)\widetilde \T((0,t_2);(x_1+x_2,0)\cdot \HH)\widetilde \T((x_1,0);(x_2,0)\cdot \HH) \widetilde \T((x_2,0); \HH)\nonumber\\
&=\widetilde \T((0,t_1);(x_1+x_2,t_2)\cdot \HH) \widetilde \T((x_1,0);(x_2,t_2)\cdot \HH)\widetilde \T((0,t_2);(x_2,0)\cdot \HH)\widetilde\T((x_2,0); \HH) \nonumber\\
&=\widetilde \T((x_1,t_1);(x_2,t_2)\cdot \HH) \widetilde\T((0,t_2);(x_2,0)\cdot \HH)\widetilde\T((x_2,0); \HH)\nonumber\\
&= \widetilde \T((x_1,t_1);(x_2,t_2)\cdot \HH) \widetilde\T((x_2,t_2); \HH)\nonumber.
\end{align}
Here we have used \eqref{midstep} to go from the third line to the fourth. The first equality and the last two equalities follow from \eqref{e.cocycletx}.

It remains to prove \eqref{eq:suffices}. For convenience the left and right sides of \eqref{eq:suffices} are denoted by $\ell$ and $r$ respectively. It is easy to check that their initial conditions at $x=t=0$ are the same. 

Note that by \eqref{T'=BT for cs} and \eqref{defn:jointgroup} we have
\begin{equation*}
\dfrac{\partial}{\partial t}\ell=\widetilde\B((x,t)\cdot\HH)\ell
\end{equation*}
and
\begin{eqnarray}\label{eq:diffr}
\dfrac{\partial}{\partial t}r &=& \left ( \dfrac{\partial}{\partial t} \widetilde\T((x,0);(0,t)\cdot \HH)\right)\widetilde\T((0,t);\HH) \\ \label{eq:diffr}
&&+ \,  \widetilde\T((x,0);(0,t)\cdot\HH)\widetilde\B((0,t)\cdot \HH)\widetilde\T((0,t);\HH). \nonumber
\end{eqnarray}

Our goal is to have $\ell$ and $r$ solve the same differential equation, so we are done if we can express the right hand side of \eqref{eq:diffr} in the form $ \widetilde\B((x,t)\cdot\HH)r$. This will obviously follow from
\begin{align*}
\nonumber&\left(\dfrac{\partial}{\partial t} \widetilde\T((x,0);(0,t)\cdot \HH)\right)\widetilde\T^{-1}((x,0);(0,t)\cdot \HH)\\
 \nonumber &+ \widetilde\T((x,0);(0,t)\cdot \HH) \widetilde\B((0,t)\cdot H)\widetilde\T^{-1}((x,0);(0,t)\cdot \HH)\\
  &= \widetilde\B((x,t)\cdot\HH).
\end{align*}
Replacing $(0,t)\cdot\HH$ with $\HH$, this is equivalent to
\begin{equation}
\dfrac{\partial}{\partial t} \widetilde\T((x,0);(0,t)\cdot\HH)|_{t=0} = \widetilde\B((x,0)\cdot \HH)\widetilde\T((x,0);\HH) - \widetilde\T((x,0);\HH)\widetilde\B(\HH). \label{eq:SecondStage}
\end{equation}
We perform the same trick again. We denote the left hand side of \eqref{eq:SecondStage} as $\tilde\ell$ and the right hand side as $\tilde r$. Observe that they have the same initial conditions at $x=0$. Note that, using \eqref{T'=MT for cs},
\begin{equation}
\dfrac{\partial}{\partial x}\tilde\ell =  \widetilde \M((x,0)\cdot\HH)\tilde\ell(x) + \dfrac{\partial}{\partial t}{\widetilde\M}((x,0)\cdot\HH)\widetilde\T((x,0);\HH)\label{eq:tildeell}
\end{equation} 
and 
\begin{align}
\dfrac{\partial}{\partial x}\tilde r =& \dfrac{\partial}{\partial x}\widetilde\B((x,0)\cdot\HH)\widetilde\T((x,0);\HH) \label{eq:tilder}\\
&+ \widetilde\B((x,0)\cdot \HH)\widetilde\M((x,0)\cdot\HH)\widetilde\T((x,0);\HH) \nonumber\\
&- \widetilde\M((x,0)\cdot\HH)\widetilde\T((x,0);\HH)\widetilde\B(\HH).     \nonumber
\end{align}
The zero-curvature equation \eqref{zceqnforcs} together with \eqref{eq:tilder} implies that \eqref{eq:tildeell} still holds if we replace $\tilde\ell$ with $\tilde r$. Thus $\tilde \ell=\tilde r$ and this concludes our proof.
\end{proof}

\section{Main result}
In this section we would like to see how difficult it is to find canonical system flows \eqref{cs systems} satisfying the three equations \eqref{threeeqnsforcs}. 

\begin{Theorem}\label{mainthm}
Let $f(x,t),g(x,t),h(x,t)$ be the entries of $(x,t)\cdot\HH$, as in \eqref{eq:HHdefn} and assume that they are smooth on $x$ and $t$ and that they are differential polynomials in $z$ of degree at least two. Then there is no canonical system flow \eqref{cs systems} satisfying the three equations \eqref{threeeqnsforcs}, unless $(\det ((x,t)\cdot \HH))^{-1/2}$ is a differential polynomial in $f(x,t)$, $g(x,t)$ and $h(x,t)$.
\end{Theorem}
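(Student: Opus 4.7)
My plan is to compare coefficients of powers of $z$ in the three equations \eqref{threeeqnsforcs}, starting from the top. Write $\widetilde A=\sum_{k=0}^n A_k z^k$, and similarly for $\widetilde C,\widetilde D$, where $n\ge 2$ and each $A_k, C_k, D_k$ is a differential polynomial in $f,g,h$. The left-hand sides of \eqref{threeeqnsforcs} have $z$-degree at most $n$ (since $h_t,f_t,g_t$ are independent of $z$), while the right-hand sides have degree $n+1$. Matching the $z^{n+1}$-coefficient therefore forces
\begin{equation*}
hA_n-gC_n=0,\quad fA_n-gD_n=0,\quad fC_n-hD_n=0.
\end{equation*}
Writing $T_n:=\bigl(\begin{smallmatrix}A_n & C_n\\ -D_n & -A_n\end{smallmatrix}\bigr)$ and $\tilde\HH:=\bigl(\begin{smallmatrix}g & h\\ -f & -g\end{smallmatrix}\bigr)$, these three relations are exactly the entries of $[T_n,\tilde\HH]=0$. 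Since $\det\tilde\HH=-\Delta\ne 0$ and both matrices are traceless, the centralizer of $\tilde\HH$ among trace-zero $2\times 2$ matrices is one-dimensional, so $T_n=\lambda\tilde\HH$ for some scalar $\lambda=\lambda(x,t)$; equivalently $A_n=\lambda g$, $C_n=\lambda h$, $D_n=\lambda f$.

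Next I would match the $z^n$-coefficient. This step is legitimate precisely because $n\ge 2$, so the $zh_t,zf_t,zg_t$ terms remain at degree $1$ and do not interfere. Substituting the formulas for $A_n,C_n,D_n$ into the $z^n$-comparison yields the linear system
\begin{align*}
2hA_{n-1}-2gC_{n-1}&=-(\lambda h)_x,\\
2fA_{n-1}-2gD_{n-1}&=\phantom{-}(\lambda f)_x,\\
-fC_{n-1}+hD_{n-1}&=-(\lambda g)_x.
\end{align*}
A direct check shows the $3\times 3$ coefficient matrix is singular: its right kernel is spanned by $(g,h,f)^\top$ (mirroring Step~1) and its left kernel is spanned by $(f,-h,-2g)$. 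Solvability thus requires the left kernel to annihilate the right-hand side, and after expanding by the product rule together with $\Delta_x=fh_x+f_x h-2gg_x$, this collapses to
\begin{equation*}
-2\Delta\,\lambda_x-\lambda\,\Delta_x=0,\qquad\text{i.e.,}\qquad(\lambda^2\Delta)_x=0.
\end{equation*}
Hence $\lambda^2\Delta=c(t)^2$ for some function $c(t)$ of $t$ alone, and since the degree is exactly $n$ we have $T_n\not\equiv 0$, forcing $c(t)\ne 0$ and $\lambda=\pm c(t)\Delta^{-1/2}$.

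To conclude, the hypothesis that $A_n=c(t)g\Delta^{-1/2}$, $C_n=c(t)h\Delta^{-1/2}$, $D_n=c(t)f\Delta^{-1/2}$ all lie in the differential polynomial ring of $f,g,h$, together with the fact that $c(t)$ is constant in the $x$-variable, forces $\Delta^{-1/2}$ itself to be expressible as a differential polynomial in $f,g,h$, which is the desired conclusion of Theorem \ref{mainthm}. The step I expect to be the main obstacle is the solvability analysis in Step~2: one has to correctly identify the left null space of the rank-two coefficient matrix and then carefully unpack the product rule, so that the three-term expression collapses into the elegant identity $(\lambda^2\Delta)_x=0$. Once that identity is isolated, the remainder of the argument is essentially forced.
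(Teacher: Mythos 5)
Your proposal is correct and takes essentially the same route as the paper's proof: matching the $z^{n+1}$-coefficients of \eqref{threeeqnsforcs} forces the top coefficients to be proportional to $(g,h,f)$ (your $\lambda$ is the paper's $K$), and your solvability condition at order $z^{n}$, obtained by pairing with the left-kernel vector $(f,-h,-2g)$, is exactly the $z^{n}$-coefficient of the consistency relation \eqref{Deltacondition} from Lemma \ref{condition on Delta}, yielding the same ODE $\Delta_x K+2\Delta K_x=0$ (your $(\lambda^{2}\Delta)_x=0$) and hence $K=\kappa\Delta^{-1/2}$. The differences are cosmetic---your centralizer argument makes the proportionality step a bit more explicit, and you allow the integration ``constant'' to depend on $t$; note only the harmless sign slip $\det\bigl(\begin{smallmatrix} g&h\\ -f&-g\end{smallmatrix}\bigr)=+\Delta$, not $-\Delta$.
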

\newpage
\noindent
\textbf{Remarks}
\begin{enumerate}
\item The condition that the entries of $\widetilde{\B}$ are differential polynomials of $f$, $g$ and $h$ is reasonable, since the same restriction applies for the corresponding $\B$ for KdV flows.  For KdV setting, the potentials $V$ are expressed as 
\begin{equation}\label{V in detH}
V=\frac14 \det \left((x,t)\cdot\HH^V_{xx}\right),
\end{equation} 
where $\HH^V$ are the Hamiltonians of the canonical systems which are re-written as the Schr\"odinger equations $t\cdot Ly=zy$. As discussed before, the entries of $\B$ are differential polynomials in $V$, and they are therefore differential polynomials of the entries of $\HH^{V}$. In other words, since $V$ is a differential polynomial of the entries of $\HH^{V}$ and the entries of $\B$ are differentia polynomials of $V$, the entries of $\B$ are differential polynomials of the entries of $\HH^{V}$.  This is exactly the same condition as the one in our main theorem. 
For \eqref{V in detH}, it turns out that 
\begin{equation*}
\HH^V=\begin{pmatrix} u^2_0 & u_0v_0 \\ u_0v_0 & v^2_0 \end{pmatrix}
\end{equation*}
where $u_0$ and $v_0$ are the solutions for zero energy, i.e., $Ly=0$ with $V$ satisfying \eqref{bcat0} with $u_0$ and $v_0$ instead. Due to the fact that $u_{0,xx}=Vu_0$ and $v_{0,xx}=Vv_0$, direct computation shows the expression 
\begin{equation*}
\det (\HH^V_{xx})=4V(u_0v_{0,x}-u_{0,x}v_0)^2=4V.
\end{equation*} 

\item It is notable that, even if we assume that $\Delta$ is constant (maybe this is the easiest case), the canonical system flows \eqref{cs systems} or three equations \eqref{threeeqnsforcs} would not be significantly easier. For example, the zero $\Delta$ can reduce  three equations \eqref{threeeqnsforcs} to two equations. However, it does not give any better information about the existence of the global solutions to \eqref{cs systems} satisfying these two conditions.

\item It is known that the Schr\"odinger eigenvalue problem can be rewritten as a canonical system. See, for instance Section 1.3 of \cite{RemBook}. The KdV flow can be thought of as a time-evolution of the Schr\"odinger eigenvalue problem. So why are the obstructions we discover absent for the KdV hierarchy? This is because the canonical system flow is not really a generalization of the KdV flow. The shift in $x$ on KdV flows is different from the one on canonical system flows. More precisely, when we express a Schr\"odinger equation as a canonical system, the $x$-shift in the Schr\"odinger equation does not correspond to the $x$-shift in the canonical system. The true generalization of the KdV flow is rather the \textit{twisted} canonical system flows studied in \cite{Remflow}.   

\item Our result shows that there are no canonical system flows corresponding to polynomials of degree two or more (unless perhaps $(\det ((x,t)\cdot\HH))^{-1/2}$ is a differential polynomial for every $t$, which is a rather restrictive condition). Flows of degree zero correspond to shifts in $x$.  Finding flows of degree one requires us to solve an existence problem involving a system of nonlinear PDEs. Figuring out if these degree one flows exist would be an interesting open problem.
\end{enumerate}

To prove Theorem \ref{mainthm}, the following lemma is easy but crucial, and it gives insight to the reasons for the conditions in Theorem \ref{mainthm}. 
\begin{Lemma}\label{condition on Delta}
For the three equations \eqref{threeeqnsforcs} to be consistent, the determinants $\Delta$ of Hamiltonians $\HH$ ,must satisfy the condition 
\begin{equation}\label{Deltacondition}
z\Delta_t= f \widetilde{C}_x + h \widetilde{D}_x - 2g \widetilde{A}_x.
\end{equation}
\end{Lemma}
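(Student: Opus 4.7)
The plan is to derive \eqref{Deltacondition} as a purely algebraic consequence of the three equations in \eqref{threeeqnsforcs}, by taking the linear combination that is designed to reproduce $\Delta_t$ on the left side while annihilating the undifferentiated $\widetilde A, \widetilde C, \widetilde D$ terms on the right side.

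First I would compute $\Delta_t$ from \eqref{Delta}. By the product rule,
\[
\Delta_t = f_t h + f h_t - 2 g\, g_t,
\]
which tells us that the natural weights to assign to the three equations are $f$ on the first, $-h$ on the second (so that $-zf_t$ becomes $+zhf_t$), and $-2g$ on the third. So the key step is to multiply the first, second, and third equations of \eqref{threeeqnsforcs} by $f$, $-h$, and $-2g$ respectively, then add them together.

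On the left-hand side, the time-derivative contributions combine as $z(f h_t + h f_t - 2g g_t) = z \Delta_t$, while the remaining derivative terms add up to $-f \widetilde C_x - h \widetilde D_x + 2g \widetilde A_x$. On the right-hand side, the six resulting terms cancel in pairs: $2zfh\widetilde A$ appears with both signs, as does $2zfg\widetilde C$, and $2zgh\widetilde D$. The right-hand side is therefore identically zero.

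Rearranging what remains yields
\begin{equation*}
z\Delta_t = f \widetilde C_x + h \widetilde D_x - 2g \widetilde A_x,
\end{equation*}
which is precisely \eqref{Deltacondition}. There is no real obstacle here; the lemma is an algebraic identity, and the only choice is picking the weights $(f,-h,-2g)$, which is forced by the form of $\Delta_t$. The conceptual content is the observation that the three equations in \eqref{threeeqnsforcs} cannot be independent unless their weighted combination in the direction of $\Delta$ respects this compatibility condition, and this is what makes Lemma \ref{condition on Delta} the natural starting point for the obstruction analyzed in Theorem \ref{mainthm}.
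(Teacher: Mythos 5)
Your proof is correct and is essentially the paper's own argument: the paper writes \eqref{threeeqnsforcs} as a matrix equation with singular coefficient matrix and hits it with a left-kernel vector, which is precisely your weights $(f,-h,-2g)$, so the two derivations coincide term by term. The cancellations you list ($2zfh\widetilde A$, $2zfg\widetilde C$, $2zgh\widetilde D$ each appearing with both signs) and the identification $z(fh_t+hf_t-2gg_t)=z\Delta_t$ check out, yielding exactly \eqref{Deltacondition}.
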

The condition \eqref{Deltacondition} strongly suggests that it is difficult to have canonical system flows similar to KdV flows, since  the independence of $\Delta$ on $z$ tells that the left-hand side is (at most) linear in $z$.  However, the right-hand side consists of polynomials of any degree which can be chosen at our disposal.  As polynomials in $z$, the equality \eqref{Deltacondition} would thus be very difficult to satisfy. 
\begin{proof}
The idea is to express three equations \eqref{threeeqnsforcs} as a matrix equation: 
\begin{equation}\label{matrixeqnforcs}
z
\begin{pmatrix} 2h & -2g & 0 \\ 2f&0&-2g\\ 0&-f&h \end{pmatrix}
\begin{pmatrix} \widetilde{A} \\ \widetilde{C} \\ \widetilde{D}\end{pmatrix}
=\begin{pmatrix} zh_t-\widetilde{C}_x\\ -zf_t+\widetilde{D}_x \\ zg_t-\widetilde{A}_x\end{pmatrix}.
\end{equation}
Observe that  the coefficient matrix above has determinant $0$. By applying elementary row operations or multiplying some vector in the kernel of the coefficient matrix from the left, \eqref{matrixeqnforcs} reads  
\begin{equation*}
z
\begin{pmatrix} 2fh & -2fg & 0 \\ 0&0&0\\ 0&-fg&gh \end{pmatrix}
\begin{pmatrix} \widetilde{A} \\ \widetilde{C} \\ \widetilde{D}\end{pmatrix}
=\begin{pmatrix}zfh_t-f\widetilde{C}_x\\ -z\Delta_t+f \widetilde{C}_x + h \widetilde{D}_x - 2g \widetilde{A}_x \\ zgg_t-g\widetilde{A}_x \end{pmatrix}.
\end{equation*}
Thus \eqref{matrixeqnforcs} has a solution only if \eqref{Deltacondition} is satisfied. 
\end{proof}
\textbf{Remark.} A similar process can be applied to the KdV setting. 
A version of the three equations in \eqref{threeeqns} for KdV flows can be written as the matrix equation
\begin{equation*}
\begin{pmatrix} 2&0&0\\ 0& V-z &1 \\ 2(V-z) &0&0\end{pmatrix}
\begin{pmatrix}  A\\C\\D \end{pmatrix}
=
\begin{pmatrix}  -C_x \\ -A_x \\ -V_t-D_x \end{pmatrix}.
\end{equation*}
By the Gauss elimination method, we obtain another reduced matrix equation
\begin{equation*}
\begin{pmatrix} 2&0&0\\ 0& V-z &1 \\ 0 &0&0 \end{pmatrix}
\begin{pmatrix}  A\\C\\D \end{pmatrix}
=
\begin{pmatrix}  -C_x \\ -A_x \\ -V_t-D_x+(V-z)C_x \end{pmatrix},
\end{equation*}
which implies that 
\begin{equation*}
V_t=-D_x+(V-z)C_x.
\end{equation*}
This can be considered as a ``prototype'' of the KdV hierarchy. This is because first two equations on \eqref{threeeqns}, which are $A=-\frac12 C_x$ and $D=-A_x-(V-z)C$, imply that  
\begin{equation*}
V_t=-D_x+(V-z)C_x \implies V_t=-\frac12 C_{xxx}+2(V-z)C_x+V_xC,
\end{equation*}
where the last equation is the KdV hierarchy \eqref{KdV hierarchy}. All this means that \eqref{Deltacondition} can be thought of as a prototype of the hierarchy for canonical system flows \eqref{cs systems} satisfying three equations \eqref{threeeqnsforcs}.

\begin{proof}[Proof of Theorem \ref{mainthm}]
Assume the entries of $\widetilde{B}$ are polynomials in $z$ of degree at least two, and write 
$
\widetilde{A}=A_0+A_1z+\cdots+ A_n z^{n}
$
with similar notation for $\widetilde{C}$ and $\widetilde{D}$. 

Since the left-hand sides on \eqref{threeeqnsforcs} are of degree at most $n\ge 2$, comparing the coefficients for the greatest power $z^{n+1}$ shows that 
\begin{equation}\label{eqnsforz^n+1}
hA_n=gC_n, \quad fA_n=gD_n \quad \textrm{and} \quad fC_n=hD_n.
\end{equation}
Then we can introduce a $K$ such that 
$A_n=gK$, $C_n=hK$ and $D_n=fK$ solve \eqref{eqnsforz^n+1}. Note that $K$ must be a differential polynomial in $f,g,h$.
Observe that \eqref{Deltacondition} shows
\begin{equation*}
fC_{k,x}+hD_{k,x}-2g A_{k,x}=0 \quad \textrm{for}\quad  2\leq k \leq n.
\end{equation*}
By plugging \eqref{eqnsforz^n+1} on the condition above when $k=n$ $(\ge 2)$, we obtain the following equation for $K$ and $\Delta$:
\begin{equation}\label{eqnforK}
\Delta_x K+2\Delta K_x=0,
\end{equation}
where $\Delta=\det \HH=fh-g^2$ (which is \eqref{Delta}).
Solving the ODE \eqref{eqnforK} gives us that, for some constant $\kappa$,
\begin{equation*}
K=\frac{\kappa}{\sqrt{\Delta}}.
\end{equation*}
Our conditions on $K$ then imply that $\Delta^{-1/2}$ has to be a differential polynomial in $f$, $g$ and $h$.
\end{proof}

\end{document}